\def\OO{{\mathcal O}}
\def\F{\mathcal{F}}
\def\G{\mathcal{G}}
\def\I{\mathcal{I}}
\def\cP{\mathcal{P}}
\def\Pic0{\mathrm{Pic}^0}
\def\l{{\underline l}}
\def\(id){\,\mathrm{id}}
\theoremstyle{plain}
\newtheorem{theorem}{Theorem}[section]
\newtheorem{theoremalpha}{Theorem}
\newtheorem{proposition/example}[theorem]{Proposition/Example}
\newtheorem{proposition}[theorem]{Proposition}
\theoremstyle{definition}
\newtheorem{definition}[theorem]{Definition}
\newtheorem{remark}[theorem]{Remark}
\newtheorem{conjecture/question}[theorem]{Conjecture/Question}
\newtheorem{remark/definition}[theorem]{Remark/Definition}
\newtheorem{notation/assumptions}[theorem]{Assumptions/Notation}
\numberwithin{equation}{section}
\theoremstyle{remark}
\begin{document}
\title{Singularities of divisors 
 of low degree on simple abelian varieties}
\author{Giuseppe Pareschi}
  \address{ Universit\`a di Roma Tor Vergata, V.le della Ricerca Scientifica, I-00133 Roma, Italy}
 \email{{\tt pareschi@mat.uniroma2.it}}
 \thanks{Partially supported by the MIUR Excellence Department Project awarded to the Department of Mathematics, University of Rome Tor Vergata, CUP E83C18000100006.}
 \begin{abstract} It is known by results of  Koll\'ar,  Ein, Lazarsfeld, Hacon and Debarre  that effective divisors representing principal and other low degree polarizations  on complex abelian varieties have mild singularities. In this note, we extend these results to all polarizations of degree $<g$ on simple $g$-dimensional abelian varieties,  
 settling  a conjecture of Debarre and Hacon.
\end{abstract}
\maketitle

%\tableofcontents
\setlength{\parskip}{.1 in}

It is known by results of  Koll\'ar,  Ein, Lazarsfeld, Hacon and Debarre  that effective divisors representing principal and other low degree polarizations  on complex abelian varieties have mild singularities (\cite[Theorem 17.13]{kollar}, \cite{el}, \cite{hacon1}, \cite{hacon2},\cite{dh}).
In this note, we prove another result in the same direction, conjectured by Debarre and Hacon in \cite[\S6]{dh}  and proved by them  for $\chi(\l)<2\sqrt g -1$ and also for low values of $g$.

\begin{theoremalpha}\label{main} Let $(A,\l)$ be a complex $g$-dimensional simple polarized abelian variety  with $\chi(\l)<g$. Then 
\begin{compactenum}
\item every effective divisor $E$ representing $\l$ is prime \emph{(Debarre-Hacon, \cite[Proposition 2]{dh})} and normal with rational singularities.
\item Let $m\ge 2$ and let $D$ be an effective divisor representing $m\l$. Then, unless $D=mE$ with $E$ representing $\l$,  one has $\lfloor \frac{1}{m}D\rfloor=0$   \emph{(\cite[Corollary 2]{dh})} and the pair $(A,\frac{1}{m}D)$ is log terminal.\end{compactenum}
\end{theoremalpha}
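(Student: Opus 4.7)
The plan is to extend the Ein-Lazarsfeld-Hacon strategy via multiplier ideals, Nadel vanishing, and Fourier-Mukai/generic vanishing theory on abelian varieties, exploiting crucially both the simplicity of $A$ and the numerical hypothesis $\chi(\l)<g$. Since \cite[Proposition 2]{dh} yields the primality of $E$ in (1) and \cite[Corollary 2]{dh} yields $\lfloor D/m\rfloor=0$ in (2), what remains is: in (1), that $E$ is normal with rational singularities, which by inversion of adjunction and the irreducibility of $E$ is equivalent to $(A,cE)$ being klt for every $c<1$; in (2), that $(A,\tfrac{1}{m}D)$ is klt.

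Both assertions reduce to the following: for an effective divisor $F$ representing $m\l$ (or $\l$, with $m=1$) and any $c$ with $cm<1$, the pair $(A,cF)$ is klt. Arguing by contradiction I would take $c$ just below the log canonical threshold and set $\J=\mathcal{J}(A,cF)$, with cosupport $Z\subset A$. For any $L$ representing $\l$, the divisor $L-cF$ is ample (of class $(1-cm)\l$), and hence Nadel vanishing combined with generic vanishing on $A$ gives
\[
H^{i}\bigl(A,\,\J\otimes L\otimes\alpha\bigr)=0 \qquad \text{for all } i>0 \text{ and all } \alpha\in\widehat A.
\]
Thus $\J\otimes L$ is a GV-sheaf on $(A,\l)$. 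The quotient $\OO_Z\otimes L$ sits in a short exact sequence of GV-sheaves, and the subtorus theorem for such sheaves forces $Z$ to be a union of translates of proper abelian subvarieties. Simplicity of $A$ then leaves only two possibilities: $Z=A$, which is ruled out because the pair is log canonical away from $Z$ for $c$ below the threshold, or $Z$ is a nonempty finite set of points.

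In the zero-dimensional case, Nadel vanishing yields the surjection
\[
H^{0}(A,L)\twoheadrightarrow H^{0}(Z,L|_{Z}),
\]
so $\mathrm{length}(\OO_Z)\le h^{0}(A,L)=\chi(\l)<g$. Contradiction follows from a local colength estimate: at an isolated non-klt point $x$ of $(A,cF)$, the condition $\mathrm{mult}_{x}(cF)\ge 1$ (extremally $\mathrm{mult}_x F\ge g/c$) combined with standard Kollár-Ein-Lazarsfeld multiplier-ideal bounds forces the colength of $\J$ at $x$ to be at least $g$, contradicting the length inequality above. For part (2), this yields $(A,\tfrac{1}{m}D)$ klt unless one is in the excluded case $D=mE$, which is precisely the alternative already carved out by $\lfloor D/m\rfloor=0$.

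The main technical obstacle is the structural step identifying $Z$ as zero-dimensional: extracting from the GV-property of $\J\otimes L$ a sharp translation-invariance statement on its Fourier-Mukai transform, and combining it with simplicity of $A$ to rule out torsion-translate components that the bare subtorus theorem leaves open, is the core of the argument and presumably where the hypothesis $\chi(\l)<g$ enters in its optimal form. A secondary difficulty is producing the local colength bound uniformly as $c$ varies below the log canonical threshold and as $m$ varies, so that the bound $\chi(\l)<g$ remains effective for all effective $F\in|m\l|$ without loss as $m$ grows.
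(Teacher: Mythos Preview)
Your outline has the right ambient setup (multiplier/adjoint ideal, Nadel/generic vanishing, Fourier--Mukai on $\widehat A$), but the two steps you flag as ``obstacles'' are in fact genuine gaps, and the paper does not close them the way you conjecture.

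\textbf{The reduction is slightly off.} For part (1), ``$(A,cE)$ klt for all $c<1$'' is equivalent only to $(A,E)$ being log canonical, which is strictly weaker than $E$ being normal with rational singularities (e.g.\ the cone over a smooth plane cubic). The paper works directly with the \emph{adjoint ideal} of $E$, not with $\mathcal J(cE)$. Similarly in part (2), proving $(A,cD)$ klt for $c<1/m$ yields only $(A,\tfrac1m D)$ lc; you must handle the boundary $c=1/m$, where $L-cD$ is numerically trivial and Nadel no longer gives IT(0). This is exactly why in the paper one has only $\dim V_{>0}(A,\I\otimes L)\le 0$, not $V_{>0}=\emptyset$.

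\textbf{The ``subtorus'' step is not available.} There is no theorem asserting that if $\OO_Z\otimes L$ (or $\I_Z\otimes L$) is GV then the cosupport $Z$ is a union of translated abelian subvarieties; indeed for $L$ positive enough this holds for arbitrary $Z$. The Green--Lazarsfeld linearity theorem constrains the \emph{cohomological support loci} $V^i(A,\I\otimes L)\subset\widehat A$, not the subscheme $Z\subset A$. What one actually gets (using that the adjoint ideal, resp.\ the multiplier ideal, is a summand of some $f_*\omega_X$) is that $V_{>0}(A,\I\otimes L)$ is a finite union of translated proper subtori, hence $0$-dimensional by simplicity. The scheme $Z$ is not shown to be $0$-dimensional, and the colength bound you propose is neither standard nor needed.

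\textbf{What the paper does instead.} The decisive input is a separate result (Theorem B in the paper): if $Z$ is any nontrivial geometrically non-degenerate subscheme of $A$ that is not a divisor in $|\l|$, and $V_{>0}(A,\I_Z\otimes L)$ is $0$-dimensional, then $\chi(\l)\ge g$. Its proof is Fourier--Mukai theoretic. One studies the sheaf $\widehat{(\I_Z\otimes L)^\vee}$ on $\widehat A$, of generic rank $\chi(\I_Z\otimes L)$. The $0$-dimensionality of $V_{>0}$ translates into a high $gv$-index, which via the Pareschi--Popa dictionary makes $\widehat{(\I_Z\otimes L)^\vee}$ a $k$-syzygy sheaf with $k=g-i_{\max}$; the Evans--Griffith syzygy theorem (or, in the IT(0) case, ampleness via Debarre plus Le Potier vanishing) then forces $\chi(\I_Z\otimes L)\ge g-1-\dim Z$. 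Pairing this lower bound with the Debarre--Hacon upper bound $\chi(\I_Z\otimes L)\le \chi(L)-1-\dim Z$ (which uses geometric non-degeneracy of $Z$) gives $\chi(\l)\ge g$, a contradiction. So the hypothesis $\chi(\l)<g$ enters through a rank bound on the Fourier--Mukai transform, not through any dimension or colength estimate on $Z$.
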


We refer to the previously quoted works, especially the introductions of \cite{el} and \cite{dh}, and to Section 10.1.B of the book \cite{laz2} for history, motivation, and applications.

The proof  makes use of all the  ingredients of the previously quoted papers, in particular (generic) vanishing theorems
 involving adjoint and multiplier ideals and  the linearity theorem for their cohomological support loci. 
  In this way, Theorem \ref{main} is a standard consequence of
   Theorem \ref{basic} below, which is the main content of the present note.  
   
   Let us recall that,  given a coherent sheaf $\F$ on an abelian variety $A$,  its cohomological support loci are the following subvarieties 
\[V^i(A,\F)=\{\alpha\in\widehat A\>|\>H^i(A,\F\otimes P_\alpha)\ne 0\}\]
of $\widehat A:=\Pic0 A$, where  $P_\alpha$ denotes the line bundle parametrized by $\alpha\in\widehat A$ via the choice of a Poincar\'e line bundle. We  also set 
\[V_{>0}(A, \F)=\bigcup_{i>0}V^i(A, \F).\]
Finally, we recall that a subvariety $X$ of an abelian variety $A$ is said to be \emph{geometrically non-degenerate} if, for all abelian subvarieties $K$ of $A$, $\dim (X+K)=\mathrm{min}\{\dim A, \> \dim X+\dim K\}$ \ (\cite[Lemma II.12]{ran} , \cite[(1.11)]{debarre}). 

  \begin{theoremalpha}\label{basic} Let $(A,\l)$ be a polarized  $g$-dimensional  abelian variety and let $L$ be a line bundle representing $\l$.   Let $Z$ be a non-trivial subscheme of $A$ with geometrically non-degenerate support. Assume also that $Z$ is not a divisor representing $\l$. 
\begin{enumerate}
\item  If $V_{>0}(A, \I_Z\otimes L)$ is empty then $\chi(\l)\ge g+1$.
\item If $V_{>0}(A, \I_Z\otimes L)$ is $0$-dimensional then $\chi(\l)\ge g$. 
\end{enumerate} 
\end{theoremalpha}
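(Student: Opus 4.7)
The plan is to apply the Fourier--Mukai transform to the tautological sequence
\[
0\to \F\to L\to L|_Z\to 0,\qquad \F:=\I_Z\otimes L,
\]
and to extract the numerical bound from the ampleness of $L$ via Mukai's identity
$\phi_L^*\widehat{L}\cong H^0(L)\otimes L^{-1}$, where $\phi_L:A\to\widehat{A}$ denotes the polarization isogeny. Since $L$ is ample, it is $\mathrm{IT}_0$ and $\widehat L:=R^0\widehat S(L)$ is a vector bundle on $\widehat A$ of rank $\chi(\l)$.

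In case~(1), the hypothesis $V_{>0}(A,\F)=\emptyset$ is equivalent to $\F$ being $\mathrm{IT}_0$; combined with $H^{>0}(L\otimes P_\alpha)=0$, the long exact sequence forces $L|_Z$ to be $\mathrm{IT}_0$ as well, so the Fourier--Mukai functor produces a short exact sequence of vector bundles on $\widehat{A}$,
\[
0\to\widehat{\F}\to\widehat{L}\to\widehat{L|_Z}\to 0,
\]
of ranks $\chi(\F)$, $\chi(\l)$, $\chi(L|_Z)$. Dualizing and pulling back by $\phi_L$ then yields
\[
0\to\phi_L^*(\widehat{L|_Z})^\vee\to L\otimes H^0(L)^\vee\to\phi_L^*(\widehat{\F})^\vee\to 0,
\]
which realizes $\phi_L^*(\widehat{\F})^\vee$ as a quotient of a direct sum of $\chi(\l)$ copies of the ample line bundle $L$.

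To finish, one must convert this structural picture into $\chi(\l)\ge g+1$. I would argue by contradiction, assuming $\chi(\l)\le g$, and use the resulting low-rank constraint on $\phi_L^*(\widehat{\F})^\vee$ together with Pareschi--Popa continuous global generation for $\mathrm{IT}_0$ sheaves to either manufacture a divisor in $|\l|$ cut out precisely by $Z$ (contradicting the exclusion $Z\notin |\l|$) or to force $\mathrm{supp}\,Z$ into a translate of a proper abelian subvariety (contradicting geometric non-degeneracy). Case~(2) proceeds analogously: the hypothesis together with the Green--Lazarsfeld/Pareschi--Popa linearity theorem forces each $V^i(A,\F)$ for $i>0$ to consist of torsion points of $\widehat A$, so the higher $R^i\widehat S(\F)$ are torsion sheaves supported at those finitely many points; the previous exact sequence then persists up to this torsion correction, which costs at most one in the rank count and yields the weaker bound $\chi(\l)\ge g$.

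The main obstacle is precisely this last numerical step: the Fourier--Mukai formalism above is essentially formal, but converting the geometric hypotheses on $Z$ into the required rank inequality is genuinely subtle, and is the place where the improvement over the Debarre--Hacon range $\chi(\l)<2\sqrt g-1$ must be achieved.
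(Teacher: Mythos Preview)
Your setup via the Fourier--Mukai transform of the tautological sequence is sound and close in spirit to the paper, but the proof stops precisely where the content lies. You acknowledge this yourself: the ``last numerical step'' is not a detail to be filled in, it is the entire argument. The paper closes this gap with two concrete ingredients that your proposal does not invoke.

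First, the \emph{upper bound} on $\chi(\I_Z\otimes L)$: this is the Debarre--Hacon inequality
\[
\chi(\I_Z\otimes L)\le \chi(\l)-1-\dim Z,
\]
valid because the support of $Z$ is geometrically non-degenerate and $0<\chi(\I_Z\otimes L)<\chi(\l)$. This is where the non-degeneracy hypothesis actually enters; your suggestion of ``forcing $\mathrm{supp}\,Z$ into a proper abelian subvariety'' via continuous global generation is not how that hypothesis is used.

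Second, the \emph{lower bound} on $\chi(\I_Z\otimes L)$: in case~(1) the paper observes that the transform $\widehat{(\I_Z\otimes L)^\vee}$ is locally free and its dual is M-regular, hence ample by Debarre; then Le~Potier vanishing bounds the range of nonvanishing $V^i$, while the $\mathcal{E}xt$-description of these loci shows $V^{g-\dim Z-1}$ is nonempty. This gives $\chi(\I_Z\otimes L)\ge g-\dim Z$. In case~(2) the transform is a non--locally-free $k$-syzygy sheaf with $k=g-i_{\max}$, and the Evans--Griffith syzygy theorem provides the rank bound $\chi(\I_Z\otimes L)\ge g-i_{\max}$; one then uses $i_{\max}\le \dim Z+1$. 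Your heuristic that in case~(2) a ``torsion correction costs at most one in the rank count'' has no evident justification and is not how the borderline case is handled.

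Combining the two bounds immediately gives $\chi(\l)\ge g+1$ in case~(1) and $\chi(\l)\ge g$ in case~(2). Your pullback-by-$\phi_L$ display is correct but does not by itself produce either inequality; the decisive tools are Le~Potier vanishing, the Evans--Griffith theorem, and the Debarre--Hacon lemma.
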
 
 
The proof of Theorem \ref{basic}  is based on the study of the Fourier-Mukai-Poincar\'e transform of the derived dual of the sheaf $\I_Z\otimes L$. This operation produces (see \S1 below)  a coherent sheaf  in cohomological degree $g$ on the dual abelian variety $\widehat A$,   whose generic rank is $\chi(\I_Z\otimes L)$. Such a sheaf is usually denoted by $\widehat{(\I_Z\otimes L)^\vee}$. From a body of results on the Fourier-Mukai-Poincar\'e transform,  it follows that    in case (1), $\widehat{(\I_Z\otimes L)^\vee}$ is an ample vector bundle, while in case (2), it is   a $k$-syzygy sheaf, with $k$ sufficiently high. Applying to $\widehat{(\I_Z\otimes L)^\vee}$ the Le Potier vanishing theorem in the former case and  the Evans-Griffith syzygy theorem in the latter case,\footnote{Interestingly, the theorems of Le Potier and Evans-Griffith are related. In fact, via linear complexes as in \cite[Remark 4.2]{bgg}, one can show that the application of the Evans-Griffith theorem needed here is in turn implied by Le Potier vanishing via an argument of Ein (\cite[Example 7.3.10]{laz2}). } we obtain a lower bound for the generic rank of the sheaf $\widehat{(\I_Z\otimes L)^\vee}$, hence for  $\chi(\I_Z\otimes L)$. This, combined with an upper bound for  $\chi(\I_Z\otimes L)$ due to Debarre-Hacon (inequality (\ref{ineq1}) below), proves Theorem \ref{basic}.  

\noindent\textbf{Acknowledgments. } The author thanks  Giulio Codogni, Zhi Jiang, Mihnea Popa, and Stefan Schreieder for their useful and interesting comments.

\section{Background on the Fourier-Mukai-Poincar\'e transform} 
In this section we briefly recall the necessary background about some sheaf-theoretic properties of the Fourier-Mukai-Poincar\'e transform on abelian varieties. We refer to the papers quoted below or to the survey \cite{msri} for more details. 

A Poincar\'e line bundle $\cP$ on $A\times \widehat A$ defines a Fourier-Mukai functor 
\[\Phi^{A\rightarrow\widehat A}_{\cP}:D(A)\rightarrow D(\widehat A)\]
 which is an equivalence of categories (Mukai \cite{mukai1}). Its inverse is \ 
 $\Phi_{\cP^\vee}^{\widehat A\rightarrow A}[g]$, \ which can be expressed as 
\begin{equation}\label{involution}\Phi_{\cP^\vee}^{\widehat A\rightarrow A}[g]=(-1)^*_A\circ \Phi_{\cP}^{\widehat A\rightarrow A}[g] \>,
\end{equation} 
where $(-1)_A $ denotes the natural involution on $A$. This follows from the fact that $\cP^\vee=(-1,1)^*_{A\times \widehat A}\cP=(1,-1)^*_{A\times \widehat A}\cP$ (\cite[Lemma 14.1.2]{bl}).

\begin{definition}\label{gv-etc}(\cite[Definition 3.1]{pp2})
Let $\F$ be a coherent sheaf of $A$. 
The \emph{gv-index} of $\F$ is the integer 
\[gv(\F)=\mathrm{min}_{i>0}\,(\mathrm{codim}_{\widehat A}V^i(A,\F)-i)\]
Moreover: \\
- if $gv(\F)\ge 0$  then $\F$ is said to be  a  \emph{Generic Vanishing sheaf}, or simply  \emph{GV};\\
-  if $gv(\F)\ge 1$  then $\F$ is said to be \emph{Mukai-regular}, or simply \emph{M-regular}; \\
- if $V_{>0}=\emptyset$  then $\F$ is said to \emph{verify the Index Theorem with index 0},  or simply \emph{IT(0)}.
\end{definition}

\vskip0.2truecm We set $\F^\vee:=R\mathcal Hom_A(\F,\OO_A)$. We have the following duality result. 

\begin{theorem}\label{wit} \emph{(Hacon, Pareschi-Popa, see \cite[Theorem 2.2]{pp2}, \cite[Theorem A]{pp})} Let $\F$ be a coherent sheaf on an abelian variety $A$. Then $\F$ is GV if and only if $\Phi^{A\rightarrow \widehat A}_{\cP^\vee}(\F^\vee)$ is a sheaf in degree $g$, i.e.
\[\Phi^{A\rightarrow \widehat A}_{\cP^\vee}(\F^\vee)=R^g\Phi^{A\rightarrow \widehat A}_{\cP^\vee}(\F^\vee)[-g].\]
\emph{If this is the case, following Mukai, we use the following notation
\[ \widehat{\F^\vee}:=R^g\Phi^{A\rightarrow \widehat A}_{\cP^\vee}(\F^\vee).\>\footnote{In some of the quoted references,  $\widehat{\F^\vee}$ denotes instead the sheaf $R^g\Phi^{A\rightarrow \widehat A}_{\cP}(\F^\vee)$.
 However this will not cause  trouble since, as in (\ref{involution}), $\Phi_{\cP^\vee}^{A\rightarrow \widehat A}[g]=(-1)^*_{\widehat A}\circ \Phi_{\cP}^{A\rightarrow \widehat A}$, hence the two transforms produce essentially the same results.} \]}
 \end{theorem}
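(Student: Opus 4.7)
The plan is the one the author sketches in the paragraph before the acknowledgments: use Theorem~\ref{wit} to package $\I_Z\otimes L$ as a single coherent sheaf $\widehat{(\I_Z\otimes L)^\vee}$ on $\widehat{A}$ of generic rank $\chi(\I_Z\otimes L)$, promote the cohomological hypothesis on $V_{>0}$ into a positivity or syzygy property of that sheaf, apply Le Potier in case~(1) and Evans--Griffith in case~(2) to obtain a lower bound on its rank, and then confront this with the Debarre--Hacon upper bound on $\chi(\I_Z\otimes L)$ in terms of $\chi(\l)$.

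Set $\F:=\I_Z\otimes L$. In case~(1) the hypothesis $V_{>0}(A,\F)=\emptyset$ is exactly IT(0); in case~(2) the hypothesis together with the automatic vanishing $H^g(A,\F\otimes P_\alpha)=0$ (coming from the defining short exact sequence of $\I_Z$ and Kodaira vanishing for the ample line bundle $L\otimes P_\alpha$) yields $\mathrm{codim}_{\widehat{A}}V^i(A,\F)\geq i+1$ for every $i>0$. In either case $\F$ is GV, so Theorem~\ref{wit} produces the coherent sheaf $\widehat{\F^\vee}$ on $\widehat{A}$, of generic rank $\chi(\F)$ and locally free in case~(1).

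The key technical step---and where I expect the main obstacle to be---is to upgrade these hypotheses into genuine positivity properties of $\widehat{\F^\vee}$. In case~(1) I would combine IT(0) with the geometric non-degeneracy of $\mathrm{supp}(Z)$: the Pareschi--Popa machinery for M-regular sheaves gives continuous global generation of $\widehat{\F^\vee}$, and geometric non-degeneracy is precisely the condition needed to exclude a trivial quotient (equivalently, a trivial piece that would force the support to decompose modulo a proper abelian subvariety), promoting continuous global generation to honest ampleness. In case~(2) the strong codimension estimate on the $V^i$'s should translate, through Mukai's inversion formula applied to $\widehat{\F^\vee}$ and the identification of $\EExt^j(\widehat{\F^\vee},\OO_{\widehat{A}})$ with Fourier--Mukai transforms of the cohomology sheaves of $\F^\vee$, into the vanishing of $\EExt^j(\widehat{\F^\vee},\OO_{\widehat{A}})$ for $j$ in the appropriate range; this is the standard cohomological characterization of being a $k$-th syzygy on the smooth variety $\widehat{A}$, and the expected value of $k$ is close to $g$.

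With these structural statements in hand, the numerical conclusion is essentially a citation. Since $\widehat{A}$ has trivial cotangent bundle, Le~Potier vanishing applied to the ample bundle $E:=\widehat{\F^\vee}$ reduces to a clean statement forcing $\mathrm{rank}(E)\geq g$, hence $\chi(\F)\geq g$; similarly, the Evans--Griffith syzygy theorem applied to the non-free $(g-1)$-syzygy $\widehat{\F^\vee}$ gives $\mathrm{rank}(E)\geq g-1$, i.e.~$\chi(\F)\geq g-1$. Combining with the Debarre--Hacon inequality~\eqref{ineq1}, which under the hypothesis that $Z$ is nontrivial and not a divisor representing $\l$ gives $\chi(\F)\leq\chi(\l)-1$, produces $\chi(\l)\geq g+1$ in case~(1) and $\chi(\l)\geq g$ in case~(2), as claimed.
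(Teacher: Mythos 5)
There is a fundamental mismatch: what you have written is not a proof of the statement in question. The statement is Theorem \ref{wit}, the Hacon/Pareschi--Popa duality asserting that $\F$ is GV if and only if $\Phi^{A\rightarrow\widehat A}_{\cP^\vee}(\F^\vee)$ is concentrated in cohomological degree $g$; in the paper this is a background result quoted from \cite[Theorem 2.2]{pp2} and \cite[Theorem A]{pp} without proof. Your text instead sketches the proof of Theorem \ref{basic}, and it invokes Theorem \ref{wit} as a black box (``use Theorem \ref{wit} to package $\I_Z\otimes L$ as a single coherent sheaf\dots''). Nowhere do you argue either implication of the equivalence: nothing explains why the conditions $\mathrm{codim}_{\widehat A}V^i(A,\F)\ge i$ force $R^j\Phi^{A\rightarrow\widehat A}_{\cP^\vee}(\F^\vee)=0$ for $j<g$, nor the converse. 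An actual proof needs the Grothendieck duality identity underlying (\ref{duality}), relating $R\Phi_{\cP}(\F)$ to the derived dual of $R\Phi_{\cP^\vee}(\F^\vee)$, together with base change identifying supports of the transform's cohomology sheaves with the loci $V^i(A,\F)$, plus the commutative-algebra criterion for a complex to be quasi-isomorphic to a sheaf in top degree in terms of codimension bounds on the supports of the cohomology of its dual (or, alternatively, Hacon's original route via Mukai inversion and vanishing against transforms of sufficiently ample line bundles on $\widehat A$). None of this appears in the proposal, so as a proof of Theorem \ref{wit} it is empty.

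Even read as a sketch of Theorem \ref{basic}, the outline deviates from the paper at the two load-bearing points. In case (1) the paper gets ampleness of the transform not from continuous global generation plus non-degeneracy of $Z$, but from torsion-freeness of $\I_Z\otimes L$ (giving M-regularity of $\G^\vee$ via Theorem \ref{syzygy}(2)) and Debarre's theorem that M-regular sheaves are ample; and Le Potier vanishing by itself cannot force $\mathrm{rk}\,\G^\vee\ge g$ --- the lower bound comes from the non-vanishing of $\EExt^{g-\dim Z}(\OO_Z,\OO_A)$, which makes $V^{g-\dim Z-1}(\widehat A,\G^\vee)$ non-empty, so the bound is $\chi(\I_Z\otimes L)\ge g-\dim Z$, and the $\dim Z$ term is essential because (\ref{ineq1}) reads $\chi(\I_Z\otimes L)\le\chi(L)-1-\dim Z$ (your version drops $\dim Z$ on both sides and only balances when $Z$ is a point). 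But these are secondary observations; the primary defect is that the statement actually assigned --- the GV/WIT$_g$ equivalence --- is never proved.
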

\begin{remark}\label{remark} (a) Assume that $\F$ is GV. By base change and Serre duality the support of the sheaf $\widehat{\F^\vee}$ is  $V^0(A, \F)$. Therefore the subvariety  $V^0(A, \F)$ is non-empty as soon as $\F$ is non-zero, since otherwise $\Phi^{A\rightarrow \widehat A}_{\cP^\vee}(\F^\vee)$ would be zero, hence $\F$ itself would be zero.\\
(b)  For a GV sheaf  $\F$, the Euler characteristic   $\chi(\F)$ is equal to the generic value of $h^0(A,\F\otimes P_\alpha)$, for $\alpha\in\widehat A$. Therefore
$\chi(\F)\ge 0$
 and  $\chi(\F)$ coincides  with the generic rank of $\widehat{\F^\vee}$. 
 \end{remark}

For a GV sheaf $\F$, the dictionary between the gv-index and the sheaf-theoretic properties of the transform $\widehat{\F^\vee}$ is summarized in the following statement.
\begin{theorem}\label{syzygy}  Let $\F$ be a coherent sheaf on an abelian variety $A$. \\
(1) \emph{(Pareschi-Popa, \cite[Corollary 3.2]{pp2})}  For $k\ge 0$,   \ $gv(\F)\ge k$ if and only if $\widehat{\F^\vee}$ is a k-syzygy sheaf.\footnote{We refer to \cite[ Chapter 2, \S1.1]{oss} or the Appendix in \cite{pp2} for 
 $k$-syzygy sheaves. See also \cite{eg2}). }\\
 (2) $\F$ is M-regular if and only if  $\widehat{\F^\vee}$ is torsion-free \emph{(hence, in particular,  $\chi(\F)>0$).}\\
 (3)  $\F$ is IT(0) if and only if $\widehat{\F^\vee}$ is locally free; equivalently, if and only if $\Phi^{A\rightarrow \widehat A}_{\cP}(\F)=R^0\Phi^{A\rightarrow \widehat A}_{\cP}(\F)$.
\end{theorem}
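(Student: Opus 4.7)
I would deduce all three statements from a single duality identity, combined with the classical homological characterization of $k$-syzygy sheaves on a smooth variety.

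The starting point is relative Grothendieck-Serre duality along the projection $p_2\colon A\times\widehat A\to \widehat A$. Since $A$ is abelian the relative dualizing complex is $\OO_{A\times\widehat A}[g]$, and since $\cP$ is invertible and $p_1$ is flat, one obtains a natural isomorphism
\[
R\mathcal{H}om\bigl(\Phi_\cP^{A\to\widehat A}(\F),\,\OO_{\widehat A}\bigr)\;\cong\;\Phi_{\cP^\vee}^{A\to\widehat A}(\F^\vee)[g].
\]
When $\F$ is GV, Theorem~\ref{wit} forces the right-hand side to be the sheaf $\widehat{\F^\vee}$ placed in degree $0$. Dualizing once more and using that the bounded, coherent complex $\Phi_\cP(\F)$ is perfect (hence derived-reflexive) on the smooth scheme $\widehat A$, I obtain
\[
\Phi_\cP^{A\to\widehat A}(\F)\;\cong\;R\mathcal{H}om\bigl(\widehat{\F^\vee},\,\OO_{\widehat A}\bigr),\qquad R^i\Phi_\cP^{A\to\widehat A}(\F)\;\cong\;\mathcal{E}xt^i\bigl(\widehat{\F^\vee},\,\OO_{\widehat A}\bigr).
\]
Cohomology and base change along $p_2$ then identify $\mathrm{Supp}\,R^i\Phi_\cP(\F)$ with the cohomological support locus $V^i(A,\F)$, so that altogether
\[
V^i(A,\F)\;=\;\mathrm{Supp}\,\mathcal{E}xt^i\bigl(\widehat{\F^\vee},\,\OO_{\widehat A}\bigr)\qquad\text{for every }i\ge 1.
\]

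Next, I invoke the standard Auslander-Bridger type criterion on the smooth variety $\widehat A$: a coherent sheaf $\G$ is a $k$-syzygy if and only if $\mathrm{codim}\,\mathrm{Supp}\,\mathcal{E}xt^i(\G,\OO)\ge i+k$ for every $i\ge 1$; in particular $\G$ is torsion-free precisely when it is $1$-syzygy, and $\G$ is locally free precisely when all these $\mathcal{E}xt^i$ vanish. Applied to $\G=\widehat{\F^\vee}$ and translated via the identification above, this gives (1) directly from the definition of $gv(\F)$, and (2) is its $k=1$ instance. For (3), $\F$ is IT(0) iff $V^i(A,\F)=\emptyset$ for every $i>0$; by the identification this is equivalent to $\mathcal{E}xt^i(\widehat{\F^\vee},\OO_{\widehat A})=0$ for all $i\ge 1$, i.e.\ to local freeness of $\widehat{\F^\vee}$, and equivalently to $R^i\Phi_\cP(\F)=0$ for $i>0$, i.e.\ to $\Phi_\cP(\F)=R^0\Phi_\cP(\F)$.

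The main obstacle I anticipate is the careful verification of the duality identity in the first step: one must track shifts precisely through relative Grothendieck-Serre duality and justify the second dualization, which requires knowing that $\Phi_\cP(\F)$ is a perfect complex on $\widehat A$ (this uses the smoothness of $\widehat A$ together with the properness of $p_2$ applied to a coherent input). The homological criterion used in the final paragraph is classical but nontrivial; it rests on Auslander-Bridger's transpose formalism, and is precisely the link that translates codimension bounds on $V^i$ into the syzygy/torsion-free/locally free dichotomy for $\widehat{\F^\vee}$.
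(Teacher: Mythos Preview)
Your overall strategy---Grothendieck duality to obtain $R^i\Phi_\cP^{A\to\widehat A}(\F)\cong\mathcal{E}xt^i_{\widehat A}(\widehat{\F^\vee},\OO_{\widehat A})$, followed by the homological criterion for $k$-syzygy sheaves---is precisely the approach behind the cited reference, and the paper itself does not reprove (1); it only remarks that (2) is the case $k=1$ of (1) and that (3) is immediate from Grauert's theorem on cohomology and base change.

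There is, however, a concrete gap in your sketch. The asserted equality $V^i(A,\F)=\mathrm{Supp}\,\mathcal{E}xt^i(\widehat{\F^\vee},\OO_{\widehat A})$ is false: ordinary base change gives only the inclusion $\mathrm{Supp}\,R^i\Phi_\cP(\F)\subseteq V^i(A,\F)$ (this is exactly what the paper records in Remark~\ref{ext}). Already for $\F=\OO_A$ one has $\widehat{\F^\vee}=k(\hat 0)$, so $\mathcal{E}xt^i(k(\hat 0),\OO_{\widehat A})=0$ for $0<i<g$, whereas $V^i(A,\OO_A)=\{\hat 0\}$ for every $0\le i\le g$. With only this inclusion your argument yields $gv(\F)\ge k\Rightarrow\widehat{\F^\vee}$ is $k$-syzygy, but not the converse. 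The repair is to use \emph{derived} base change: it gives $H^i(A,\F\otimes P_\alpha)\cong\mathrm{Tor}_i^{\OO_{\widehat A,\alpha}}\bigl(\widehat{\F^\vee}_\alpha,k(\alpha)\bigr)^\vee$, hence $\alpha\in V^i(A,\F)$ iff $\mathrm{pd}\,\widehat{\F^\vee}_\alpha\ge i$, i.e.\ $V^i(A,\F)=\bigcup_{j\ge i}\mathrm{Supp}\,\mathcal{E}xt^j(\widehat{\F^\vee},\OO_{\widehat A})$. The missing implication then follows, since if $\mathrm{codim}\,\mathrm{Supp}\,\mathcal{E}xt^j\ge j+k$ for all $j\ge 1$ one gets $\mathrm{codim}\,V^i=\min_{j\ge i}\mathrm{codim}\,\mathrm{Supp}\,\mathcal{E}xt^j\ge i+k$.
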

Note that (2) is a particular case of (1) since a 1-syzygy sheaf is simply a torsion-free sheaf.  Item (3) is elementary: it follows immediately from Grauert's theorem on cohomology and base-change.

\bigskip A key ingredient of the proof of Theorems \ref{wit} and \ref{syzygy} is the  identification 
\begin{equation}\label{duality}
R^i\Phi^{A\rightarrow \widehat A}_{\cP}(\F)=\mathcal Ext^i_{\widehat A}(\widehat{\F^\vee},\OO_{\widehat A}),
\end{equation}
consequence of Grothendieck duality, (see \cite[Proposition 1.6(b)]{msri}   or \cite[Corollary 3.2]{pp2}).
\begin{remark}\label{ext} From (\ref{duality}) and  base-change, it follows that the support of the sheaf $\mathcal Ext^i_{\widehat A}(\widehat{\F^\vee},\OO_{\widehat A})$ is contained in $V^i(A,\F)$. 
\end{remark}

\begin{remark}\label{exchange}Clearly the roles of $A$ and $\widehat A$ can be exchanged, and all of the above could have been said for a sheaf $\G$ on $\widehat A$ as well, starting from  the Fourier-Mukai equivalence $\Phi_{\cP}^{\widehat A\rightarrow A}:D(\widehat A)\rightarrow D(A)$. For example,  
the cohomological support loci of the sheaf $\G$ are
\[V^i(\widehat A, \widehat \G)=\{a\in A\>|\> H^i(\widehat A,\G\otimes P_a)\ne 0\}.\]
\end{remark}

 \vskip0.2truecm Finally we recall that a sheaf $\F$ on an abelian variety $A$ is \emph{homogeneous} if it has a filtration \[0=\F_0 \subset \F_1 \subset\cdots\subset \F_n =\F\]
such that $\F_i/\F_{i-1}$ is isomorphic to a line bundle in $\widehat A$  for all $i \in\{ 1, \dots, n\}$. The following proposition is certainly well known, but we could not find a reference.

\begin{proposition}\label{homogeneous} A sheaf $\F$ on $A$ is a homogeneous vector bundle if and only if $\chi(\F)=0$ and $\dim V_{>0}(A,\F)\le 0$.
\end{proposition}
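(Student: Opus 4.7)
The approach is to pass to the Fourier--Mukai dual side, where ``homogeneous vector bundle on $A$'' corresponds to ``finite-length sheaf on $\widehat A$'': a skyscraper $k(\alpha)$ transforms under $\Phi^{\widehat A \to A}_{\cP^\vee}$ to the line bundle $\cP^\vee|_{A \times \{\alpha\}}$ on $A$ in degree $0$, and iterated extensions of skyscrapers transform to iterated extensions of line bundles from $\widehat A$, i.e.\ to homogeneous vector bundles. Both directions amount to showing that this dictionary sends the finite-length condition on the transform to the conditions $\chi(\F) = 0$ and $\dim V_{>0}(A,\F) \le 0$.

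For the ``only if'' direction, assume $\F$ admits a filtration whose successive quotients are line bundles $P_{\alpha_1}, \dots, P_{\alpha_n} \in \widehat A$. Since $H^{\bullet}(A, P_{\alpha_j} \otimes P_{\beta})$ vanishes unless $\beta = -\alpha_j$, each factor satisfies $\chi(P_{\alpha_j})=0$ and $V^i(A, P_{\alpha_j}) = \{-\alpha_j\}$ for every $i$. Additivity of Euler characteristic in short exact sequences gives $\chi(\F)=0$, while the long exact sequences in cohomology give, by induction along the filtration, $V_{>0}(A, \F) \subseteq \{-\alpha_1, \dots, -\alpha_n\}$, which is zero-dimensional.

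For the ``if'' direction, the zero-dimensionality of $V^i(A, \F)$ for $i \ge 1$ gives $\mathrm{codim}_{\widehat A} V^i(A,\F) - i \ge g - i \ge 0$, so $\F$ is GV; by Theorem \ref{wit}, $\widehat{\F^\vee}$ is a well-defined coherent sheaf on $\widehat A$, and by Remark \ref{remark}(b) its generic rank equals $\chi(\F)=0$, hence $\widehat{\F^\vee}$ is torsion. Let $c \in \{1, \dots, g\}$ denote the codimension in $\widehat A$ of the largest-dimensional component of $\mathrm{supp}(\widehat{\F^\vee})$. The key step is the claim that $c = g$, i.e.\ that $\widehat{\F^\vee}$ has finite length. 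The commutative-algebra fact that the smallest $i$ with $\mathcal{E}xt^i_{\widehat A}(\widehat{\F^\vee}, \OO_{\widehat A}) \neq 0$ equals $c$, and that $\mathcal{E}xt^c_{\widehat A}(\widehat{\F^\vee}, \OO_{\widehat A})$ is supported (at least) on the $(g-c)$-dimensional components of $\mathrm{supp}(\widehat{\F^\vee})$, combined with the containment $\mathrm{supp}\,\mathcal{E}xt^c_{\widehat A}(\widehat{\F^\vee}, \OO_{\widehat A}) \subseteq V^c(A, \F)$ coming from (\ref{duality}) and Remark \ref{ext}, forces $g - c \le 0$, whence $c = g$.

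Once $\widehat{\F^\vee}$ has finite length, local duality on the smooth variety $\widehat A$ yields $\mathcal{E}xt^i_{\widehat A}(\widehat{\F^\vee}, \OO_{\widehat A}) = 0$ for $i < g$, so via (\ref{duality}) the complex $\Phi^{A \to \widehat A}_{\cP}(\F)$ reduces to the finite-length skyscraper $\mathcal{E}xt^g_{\widehat A}(\widehat{\F^\vee}, \OO_{\widehat A})$ placed in degree $g$. Inverting with $\Phi^{\widehat A \to A}_{\cP^\vee}[g]$ gives $\F = \Phi^{\widehat A \to A}_{\cP^\vee}(\mathcal{E}xt^g_{\widehat A}(\widehat{\F^\vee}, \OO_{\widehat A}))$, which by the opening dictionary is an iterated extension of line bundles from $\widehat A$, hence a homogeneous vector bundle. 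The main obstacle is the step $c=g$: it is the only place where the two hypotheses (torsion-ness of $\widehat{\F^\vee}$ coming from $\chi(\F)=0$, and zero-dimensionality of the $V^i$ for $i \ge 1$) are exploited together, via their joint effect on the local Ext sheaves of $\widehat{\F^\vee}$.
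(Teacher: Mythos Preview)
Your proof is correct and follows essentially the same route as the paper. Both arguments show that the hypotheses force $\widehat{\F^\vee}$ to be a finite-length sheaf on $\widehat A$, and then translate this back to $A$. The only differences are in packaging: where the paper invokes the known propagation property of GV sheaves (each codimension-$j$ component of $V^0$ is also a component of $V^j$) to conclude $\dim V^0(A,\F)=0$, you unwind that same fact via the Ext-codimension argument and (\ref{duality}); and where the paper then cites Mukai's characterization of homogeneous bundles, you reprove the relevant piece by inverting the Fourier--Mukai transform on a finite-length sheaf. So your argument is slightly more self-contained, but not a different approach.
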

\begin{proof} The direct implication is obvious. Conversely, if $V_{>0}(A,\F)$ is 0-dimensional or empty then $\F$ is GV. Thus, by Remark \ref{remark} the condition $\chi(\F)=0$ means that the locus $V^0(A,\F)$ is a proper subvariety of $\widehat{A}$ (non-empty if $\F$ is non-zero). It is known that the GV condition has the following pleasant consequence: any component $W$ of $V^0(A,\F)$  of codimension $j$  is 
also a component of $V^j(A,\F)$ (see \cite[Proposition 3.15]{pp}  or \cite[Lemma 1.8]{msri}). Therefore, since $V_{>0}(A,\F)$ has dimension 0, $W$ must be $0$-dimensional (in fact an isolated point in $V^g(A,\F)$). 
Therefore, $\dim V^0(A,\F)=0$ and $\widehat{\F^\vee}$ is a $\OO_{\widehat A}$-module of finite length (it is supported at $V^0(A,\F)$). By a result of Mukai (\cite[Theorems 4.17\,,\,4.19]{mukai2}), this means that $\F^\vee$ is a homogeneous vector bundle. Equivalently,
$\F$ is a homogeneous vector bundle.
\end{proof}
\section{Proof of Theorems \ref{main} and \ref{basic}}

\noindent\textbf{Proof of Theorem \ref{basic}. }  From the exact sequence 
\begin{equation}\label{obvious}0\rightarrow \I_Z\otimes L\rightarrow L\rightarrow L_{|Z}\rightarrow 0,
\end{equation}
  it follows that for $i\ge 1$
 \begin{equation}\label{trivial}V^{i}(A,L_{|Z})=V^{i+1}(A,\I_Z\otimes L).
 \end{equation} 
  Therefore the hypotheses on $V_{>0}(\I_Z\otimes L)$ imply that in both cases (1) and (2),  the sheaf $L_{|Z}$ is M-regular, hence $\chi(L_{|Z})>0$ (see Theorem \ref{syzygy}(2)). Hence $\chi(\I_Z\otimes L)<\chi(L)$. On the other hand, $\chi(\I_Z\otimes L)>0$ since otherwise, by Proposition \ref{homogeneous}, the sheaf $\I_Z\otimes L$ would be a homogeneous vector bundle, and it is easy to verify that this happens if and only if $Z$ is a divisor representing $\l$. In conclusion, we have
  \begin{equation}\label{ineq2}
  0<\chi(\I_Z\otimes L)<\chi (L)\>.
  \end{equation}
 Note that  the generic values  of $h^0(\I_Z\otimes L\otimes P_\alpha)$ and $h^0(L_{|Z}\otimes P_\alpha)$, for $\alpha\in\Pic0 A$,  coincide with $\chi(\I_Z\otimes L)$ and $\chi(L_{|Z})$. By a result of 
 Debarre-Hacon, \cite[Lemma 5(e)]{dh},\footnote{This Lemma is stated  under the assumption that the abelian variety is simple, but in fact what is needed is that the support of $Z$ is geometrically non-degenerate.} the inequalities
 (\ref{ineq2}) imply that
\begin{equation}\label{ineq1}\chi(\I_Z\otimes L)\le \chi(L)-1-\dim Z \>.
\end{equation}

\noindent\emph{Proof of (1).  } The hypothesis means that $\I_Z\otimes L$ is IT(0) (Definition \ref{gv-etc}). By Theorem \ref{syzygy}(3), one has
\[\Phi^{A\rightarrow \widehat A}_{\cP}(\I_Z\otimes L)=R^0\Phi^{A\rightarrow \widehat A}_{\cP}(\I_Z\otimes L):=\G\]
 and $\G$ is  a locally free sheaf on $\widehat A$ of rank equal to $\chi(\I_Z\otimes L)$. Therefore, taking the inverse functor, 
 \[\Phi^{\widehat A\rightarrow A}_{\cP^\vee}(\G)=R^g\Phi^{\widehat A\rightarrow A}_{\cP^\vee}(\G)[-g]=\I_Z\otimes L[-g]\>.\]
  This means that the dual vector bundle $\G^\vee$ is a GV sheaf on $\widehat A$ (by Theorem \ref{wit} applied to the equivalence $\Phi_{\cP}^{\widehat A\rightarrow A}$, see also Remark \ref{exchange}). More, since the sheaf $\widehat \G=\I_Z\otimes L$ is torsion-free,  $\G^\vee$ is  M-regular  (Theorem \ref{syzygy}(2)). But a result of Debarre (\cite[Corollary 3.2]{debarre2})  says that a M-regular sheaf is ample. Therefore the vector bundle $\G^\vee $ is ample. Thus, by Le Potier's vanishing, its cohomological support loci $V^i(\widehat A, \G^\vee)$ are empty for $i>\mathrm{rk}\,\G^\vee-1=\chi(\I_Z\otimes L)-1$. 
  
On the other hand, by (\ref{duality}) and Remark \ref{ext}, the cohomological support locus $V^i(\widehat A, \G^\vee)$ contains the support of $\mathcal Ext^i_{\widehat A}(\I_Z,\OO_{\widehat A})=\mathcal Ext^{i+1}_{\widehat A}(\OO_Z,\OO_{\widehat A})$. As soon as $Z$ has an $(i+1)$-codimensional component such a sheaf is non-zero, hence $V^i(\widehat A, \G^\vee)$ is non-empty. Therefore
\[g-(\dim Z+1)\le \chi(\I_Z\otimes L)-1.\]
Together with (\ref{ineq1}), this proves (1).

\noindent \emph{Proof of (2). } For a coherent sheaf $\F$ on $A$, we set
 \[i_{\max}(\F)=\mathrm{\max}\{i\>|\> V^i(A,\F)\ne \emptyset\}.\]
 Assume  first that $i_{\max}(\I_Z\otimes L)=1$. By hypothesis, this implies that its gv-index  (Definition \ref{gv-etc}) is
 \[gv(\I_Z\otimes L)=g-1.\] Therefore, by Theorem \ref{syzygy}(1), $\widehat{(\I_Z\otimes L)^\vee}$ is a non-locally free \ ($g-1$)-syzygy sheaf. Therefore, by the Evans-Griffith syzygy theorem (\cite[Corollary 1.7]{eg1}, see also \cite[Appendix]{pp2})  its generic rank is $\ge g-1$,  i.e. 
 \[\chi(\I_Z\otimes L)\ge g-1.\]
 As we know that $\chi(L_{|Z})\ge 1$,  (2) follows in this case.

Assume otherwise that $i_{\max}(\I_Z\otimes L)>1$. As above, we have
 \[gv(\I_Z\otimes L)=g-i_{\max}(\I_Z\otimes L).\]  Again by Theorem \ref{syzygy}(1) and Evans-Griffith, we get 
\[\chi(\I_Z\otimes L)\ge g-i_{\max}(\I_Z\otimes L).\] On the other hand, by (\ref{trivial}), we have  $i_{\max}(\I_Z\otimes L)=i_{\max}(L_{|Z})+1\le \dim Z+1$. Therefore
\[\chi(\I_Z\otimes L)\ge g-\dim Z-1.\]
Together with (\ref{ineq1}), this proves (2) and concludes the proof of Theorem \ref{basic}.

\begin{remark} (a) In Theorem \ref{basic}, the hypothesis that $Z$ is not a divisor representing the polarization $\l$ is clearly necessary. \\
(b) The inequalities of Theorem \ref{basic} are  sharp in both cases (1) and (2), as it is shown by taking for $Z$  a  point $p\in A$.  Indeed a general polarized abelian variety  of type $(1,\dots , 1,g+1)$ is base point free
(\cite[Proposition 2]{dhs}). Since the line bundles $L\otimes P_\alpha$ are the translates of $L$, this is easily seen to be equivalent to the fact that $V_{>0}(A, \I_p\otimes L)$ is empty. Similarly,  a general polarized abelian variety of type $(1,\dots , 1,g)$ has  a 0-dimensional base locus  
(\cite[Remark 3(a)]{dhs}). As above, this means that $V_{>0}(A,\I_p\otimes L)$ is 0-dimensional. 
\end{remark}

\noindent\textbf{Proof of Theorem \ref{main}. }
The fact that Lemma \ref{basic} implies  Theorem \ref{main} is  known. We review this for the sake of self-containedness, referring to \cite[\S 10.1.B]{laz2} and \cite{dh} for more details. Indeed the last assertion of (1) (respectively the last assertion of (2)) of Theorem \ref{main} is equivalent  to the triviality of the adjoint ideal of $E$ (respectively  of the multiplier ideal of the $\mathbb Q$-divisor $\frac{1}{m}D$). We claim that both ideals satisfy all hypotheses of Lemma \ref{basic}  and therefore Lemma \ref{basic} implies Theorem \ref{main}. To prove the claim, let us denote $\I$ both ideals. 

To begin with, $\I$ cannot be $\OO_A(-E)$, where $E$ is a divisor representing $L$. This is obvious for the adjoint ideal, and  it holds for the multiplier ideal of $\frac{1}{m}D$ (if $D$ is not equal to $mE$, with $E$ representing $\l$)  because $\lfloor \frac{1}{m}D\rfloor=0$ (\cite[Corollary 3]{dh}). 

Moreover, by definition, any subvariety of a simple abelian variety is geometrically non-degenerate.

 It remains only to prove  that the locus $V_{>0}(A,\I\otimes L)$ is either empty or $0$-dimensional. This follows from the hypothesis that the abelian variety $A$ is simple and  the fact that $\I$ satisfies the following property:\\
(*)  \emph{ the locus $V_{>0}(A, \I\otimes L)$ is either empty or a proper linear subvariety, i.e. a finite union of translates of proper abelian subvarieties of $\Pic0 A$. }\\
To prove (*), we recall that  the generic vanishing and linearity theorems of Green and Lazarsfeld (\cite{gl2},\cite[Remark 1.6]{el}), combined with the Grauert-Riemenschneider vanishing theorem,  say that for 
 a smooth projective variety $X$, equipped with a generically finite morphism $f:X\rightarrow A$,  the locus $V_{>0}(A,f_*\omega_X)$ is either empty or a proper linear subvariety, i.e. a finite union of translates of proper abelian subvarieties of $\Pic0 A$. Hence
for the adjoint ideal, the property (*) follows from  via the  exact sequence
\[0\rightarrow \OO_A\rightarrow \I_Z\otimes L\rightarrow f_*\omega_{E^\prime}\rightarrow 0\]
where $f:E^\prime\rightarrow E$ is any resolution of singularities of $E$ (\cite[Proposition 9.3.48]{laz2}). If instead $\I$ is the multiplier ideal of the $\mathbb Q$-divisor $\frac{1}{m}D$,  property (*) follows 
 from the same theorems of Green and Lazarsfeld via the fact that such a multiplier ideal (twisted by the canonical bundle of $A$, which is trivial) is a direct summand of the pushforward of the canonical bundle of a smooth variety via a generically finite morphism.  This in turn goes back to the work of Esnault-Viehweg (\cite [(3.13)]{ev}). A more explicit reference is  \cite[page 226]{dh}.\footnote{See also
\cite[Theorem 1.3]{budur} for a more general linearity theorem along these lines.}  This proves (*).

\providecommand{\bysame}{\leavevmode\hbox
to3em{\hrulefill}\thinspace}

\end{document}